\newcommand{\arxiv}[1]{\href{http://arxiv.org/abs/#1}{\nolinkurl{arXiv:#1}}}
\newcommand{\mr}[1]{\href{http://www.ams.org/mathscinet-getitem?mr=MR#1}{MR#1}}
\newcommand{\zbl}[1]{\href{http://www.zentralblatt-math.org/zmath/en/advanced/?q=an:#1&format=complete}{Zbl \nolinkurl{#1}}}
\newcommand{\urn}[1]{\href{http://nbn-resolving.de/urn:#1}{\nolinkurl{urn:#1}}}
\newtheorem{theorem}{Theorem}
\newtheorem{lemma}{Lemma}
\theoremstyle{definition}
\newtheorem{definition}{Definition}
\begin{document}

\title[Chopped and sliced cones]{Chopped and sliced cones and representations of Kac-Moody algebras}

\author{Thomas Bliem}

\address{\href{http://www.mi.uni-koeln.de/}{Mathematisches Institut}, Universit\"at zu K\"oln, Weyertal 86--90, 80931 K\"oln, Germany.}

\thanks{Supported by the Deutsche Forschungsgemeinschaft, \href{http://sfbtr12.uni-koeln.de/}{SFB/TR 12}.}

\begin{abstract}
We introduce the notion of a chopped and sliced cone in combinatorial geometry and prove two structure theorems for the number of integral points in the individual slices of such a cone. We observe that this notion applies to weight multiplicities of Kac-Moody algebras and Littlewood-Richardson coefficients of semisimple Lie algebras, where we obtain the corresponding results.
\end{abstract}

\maketitle

\section{Introduction}

The first occurrence of a polyhedral model of a representation is the notion of a Gelfand-Tsetlin pattern as considered by I. Gelfand and M. Tsetlin in 1950 \cite{gelfand1950}: Given the irreducible rational representation of $\mathit{GL}_n(\mathbf{C})$ of highest weight $\lambda = (\lambda_1 \geq \cdots \geq \lambda_n) \in \mathbf{Z}^n$, there is a basis of the representation space and a polytope $C^\lambda \subset \mathbf{R}^{n(n-1)/2}$ such that the basis vectors correspond bijectively to points in $C^\lambda$ with integral coordinates (or ``integral points'' for short). Moreover the family $(C^\lambda)$ depends linearly on $\lambda$ in the sense that it is given by linear inequalities on $\mathbf{R}^{n(n-1)/2} \times \mathbf{R}^n$. Moreover the constructed bases consist of weight vectors and the weight is given by a linear function on $\mathbf{R}^{n(n-1)/2}$. Hence all questions about dimensions of irreducible rational representations $V(\lambda)$ and of their weight spaces $V(\lambda)_\mu$ are reduced to the problem of determining the number of integral points in certain families of polytopes.

There are several generalizations of this; I would like to report on two of them. The first one is formed by the patterns introduced by P. Littelmann \cite{littelmann1998}: For these, $\mathit{GL}_n(\mathbf{C})$ is replaced by a symmetrizable Kac-Moody algebra $\mathfrak{g}$. To every highest weight representation there is an associated crystal graph as defined by M. Kashiwara and realized by P. Littelmann's path model. This colored graph is used to define, given any element $w \in W$ of the Weyl group with a fixed reduced decomposition, a family of polytopes $(C^\lambda)$ in $\mathbf{R}^{\mathrm{length}(w)}$ such that integral points in $C^\lambda$ correspond bijectively to elements of the crystal basis of the Demazure module $V_w(\lambda)$. Statements similar to the ones given for $\mathit{GL}_n(\mathbf{C})$ hold about the dependence of $C^\lambda$ on $\lambda$ and about the weight as a function on $C^\lambda$. In particular, if $\mathfrak{g}$ is finite dimensional one can choose $w$ to be the longest element of the Weyl group and integral points of $C^\lambda$ will correspond to elements of the crystal basis of the irreducible module $V(\lambda)$.

Note that Littelmann's patterns allow for dealing with problems concerning weight multiplicities in irreducible representations (as do the classical Gelfand-Tsetlin patterns). A. Berenstein and A. Zelevinsky \cite{berenstein2001} introduce a similar polyhedral model allowing for the expression of tensor product multiplicities $c^\mu_{\lambda\nu} = [V(\lambda) \otimes V(\nu) : V(\mu)]$ in terms of integral points in polytopes. As $\dim V(\lambda)_\mu = \lim_{\nu \to \infty} c^{\mu+\nu}_{\lambda\nu}$ this should be seen as a generalization of the polyhedral models for weight multiplicities.

The exploitation of the structure imposed on representations by the presence of these models has not been fully accomplished, even for the case of classical Gelfand-Tsetlin patterns. Interesting results in the case of classical Gelfand-Tsetlin patterns have been obtained by S. Billey, V. Guillemin and E. Rassart \cite{billey2004} as follows: They consider the equations and inequalities defining the polytopes $C^\lambda$ and the preimage $C^\lambda_\mu$ of a given weight $\mu$ under the (linear) weight map on $C^\lambda$. By a clever change of variables they obtain an expression of the weight multiplicity function for $\mathfrak{sl}_k(\mathbf{C})$ in the form
\[ \dim V(\lambda)_\mu = \Phi_{E_k} \bigl( B_k \bigl( \begin{smallmatrix} \lambda \\ \mu \end{smallmatrix} \bigr) \bigr), \]
where $E_k$ and $B_k$ are integral matrices, $\Phi_{E_k}$ is the vector partition function associated with $E_k$, and $\bigl( \begin{smallmatrix} \lambda \\ \mu \end{smallmatrix} \bigr) \in \mathbf{Z}^{2k}$ is the vector obtained by concatenating $\lambda$ and $\mu$ \cite[Th.\ 2.1]{billey2004}. This has some useful corollaries, for example using the structure theorem on vector partition functions \cite{blakley1964,dahmen1988,sturmfels1995} it follows that the weight multiplicity function is piecewise quasi-polynomial as a function of $(\lambda, \mu)$. This opens a number of questions, many of them answered in the paper, namely to count the number of regions of quasi-polynomiality (as a function of $k$), to compare them to the regions of polynomiality of the Duistermaat-Heckman measure, to determine the actual quasi-polynomials which constitute the weight multiplicity function, and to study their properties.

The purpose of this note is to set up a general framework, the chopped and sliced cone, in which the three above-mentioned cases \cite{gelfand1950,littelmann1998,berenstein2001} are subsumed.

\section{Chopped and sliced cones}
\label{CSCs}

We define a new structure in combinatorial geometry, the chopped and sliced cone.
To each chopped and sliced cone we associate a family of sequences of discrete measures.
We show that each such sequence converges weakly to an absolutely continuous measure.
Finally, to each chopped and sliced cone we associate a vector partition function such that important numerical quantities can be calculated by evaluating that vector partition function.

\subsection{Introduction}
\label{Def. of a CSC}

Let $K$, $\Lambda$ and $Q$ be free Abelian groups of finite rank.
Let $\tilde \Lambda_+$ and $R_+$ be free Abelian monoids of finite rank.
The free Abelian groups generated by a basis of $\tilde \Lambda_+$ respectively $R_+$ are denoted by $\tilde \Lambda$ and $R$.
On $\tilde \Lambda$, consider the partial order defined by $x \leq y$ if $y - x \in \tilde \Lambda_+$ and similarly for $R$.
Let $p : K \to \tilde\Lambda$, $q : K \to Q$, $r : K \to R$ and $s : \Lambda \to \tilde\Lambda$ be homomorphisms of Abelian groups.
Let $K_\mathbf{R} = K \otimes_\mathbf{Z} \mathbf{R}$ and similarly for the other groups.
The extensions of $p$, $q$, $r$ and $s$ to these real vector spaces are still denoted by the same symbol.
Figure \ref{Maps defining a CSC} contains an overview over these maps.

\begin{figure}
\[ \begin{CD}
R @<r<< K @>p>> \tilde\Lambda @<s<< \Lambda \\
@. @VVqV \\
@. Q
\end{CD} \]
\caption{Maps defining a chopped and sliced cone.}
\label{Maps defining a CSC}
\end{figure}

\begin{definition}
\label{def: chopped and sliced cone}
A 9-tuple $(K,$ $\Lambda,$ $Q,$ $\tilde \Lambda_+,$ $R_+,$ $p,$ $q,$ $r,$ $s)$ as above is called a \emph{chopped and sliced cone} if the set
\[ C^\lambda = \{ x \in K_\mathbf{R} : r(x) \geq 0,\ p(x) \leq s(\lambda) \} \]
is bounded for all $\lambda \in \Lambda$.
\end{definition}

If the extra structure is clear from the context, we may simply speak of the cone
\[ C = \{ x \in K_\mathbf{R} : r(x) \geq 0 \} \]
as being chopped and sliced.
For $\lambda \in \Lambda$ and $\beta \in Q$, define
\[ C^\lambda_\beta = \{ x \in C^\lambda : q(x) = \beta \} . \]

In order to simplify notation, for the rest of the chapter we assume without loss of generality that $\Lambda = \tilde\Lambda$ and $s = \mathrm{id}$.

\subsection{Measures associated with chopped and sliced cones}

For a fixed chopped and sliced cone $C$ we define some measures on $Q_\mathbf{R}$.
All appearing real vector spaces are topological spaces with the natural topology and as such equipped with the $\sigma$-algebra of Borel sets.
For any set $A$ let $\delta_A$ denote the counting measure associated with $A$.
If $A = \{ \beta \}$ contains only one element, we abbreviate $\delta_\beta = \delta_{\{\beta\}}$ for the corresponding Dirac measure concentrated in $\beta$.
For any measurable map $\phi$ let $\phi_*$ denote the push-forward of measures under $\phi$.
For $\lambda \in \Lambda$ and $n \in \mathbf{Z}_+$ let
\[ \mu_\lambda = \sum_{\beta \in Q} \lvert C^\lambda_\beta \cap K \rvert \cdot \delta_\beta = q_* \delta_{C^\lambda \cap K} \]
and
\[ \mu_\lambda^{(n)} = \frac{1}{n^{\mathrm{rk}(K)}} \bigl( \tfrac{1}{n} \mathrm{id}_{Q_\mathbf{R}} \bigr)_{\!*} \, \mu_{n\lambda} . \]
Let $\lambda_{K_\mathbf{R}}$ be the Lebesgue measure on $K_\mathbf{R}$, normalized such that $K$ has covolume $1$.
For a measurable subset $M \subset K_\mathbf{R}$ let $\lambda_M$ be  the $\lambda_{K_\mathbf{R}}$-absolutely continuous measure with density $1_M$, the characteristic function of $M$.
For $\lambda \in \Lambda$ we define
\[ \tilde\mu_\lambda = q_* \lambda_{C^\lambda}. \]
These measures are finite for any chopped and sliced cone $C$.

\begin{theorem} \label{th:schwache konvergenz}
Let $C$ be a chopped and sliced cone such that $q$ has full rank.
Let $\lambda \in \Lambda$.
Then $\mu_\lambda^{(n)}$ converges weakly towards $\tilde\mu_\lambda$ for $n \to \infty$. Moreover, the limit measure $\tilde\mu_\lambda$ is absolutely continuous with respect to Lebesgue measure on $Q_\mathbf{R}$.
\end{theorem}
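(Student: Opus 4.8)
The plan is to reduce the whole statement to a single lattice-point Riemann-sum computation, the key enabling fact being the homogeneity $C^{n\lambda} = n\,C^\lambda$ for every positive integer $n$. This holds because the orders on $R$ and on $\tilde\Lambda = \Lambda$ are induced by free Abelian monoids, so the conditions $r(x) \geq 0$ and $p(x) \leq \lambda$ express membership of $r(x)$ and of $\lambda - p(x)$ in the real cones spanned by those monoids, and such cones are stable under scaling by positive reals. Dividing the defining conditions of $C^{n\lambda}$ by $n$ thus yields precisely the defining conditions of $C^\lambda$ for $x/n$. Substituting $y = nz$ into the definition of $\mu_\lambda^{(n)}$ and using $q(y)/n = q(y/n)$, I would rewrite
\[ \mu_\lambda^{(n)} = \frac{1}{n^{\mathrm{rk}(K)}} \sum_{z \in C^\lambda \cap \frac{1}{n} K} \delta_{q(z)} , \]
so that for every bounded continuous $f$ on $Q_\mathbf{R}$ one has $\int f \, d\mu_\lambda^{(n)} = \frac{1}{n^{\mathrm{rk}(K)}} \sum_{z \in C^\lambda \cap \frac{1}{n} K} f(q(z))$.

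The right-hand side is exactly a Riemann sum for $\int_{C^\lambda} (f \circ q)\, d\lambda_{K_\mathbf{R}}$, since $1/n^{\mathrm{rk}(K)}$ is the covolume of the refined lattice $\frac{1}{n}K$. By the definition of a chopped and sliced cone, $C^\lambda$ is bounded, hence a compact polytope; its boundary is a finite union of faces of lower dimension and is therefore $\lambda_{K_\mathbf{R}}$-null, so $C^\lambda$ is Jordan measurable. As $f \circ q$ is continuous, it is uniformly continuous on the compact set $C^\lambda$. The standard equidistribution argument — bounding the discrepancy between sum and integral by the oscillation of $f \circ q$ over a fundamental cell of $\frac{1}{n}K$ (which tends to $0$ by uniform continuity) plus the total measure of those cells meeting $\partial C^\lambda$ (which tends to $0$ by Jordan measurability) — then gives convergence to $\int_{C^\lambda}(f\circ q)\,d\lambda_{K_\mathbf{R}} = \int f \, d\tilde\mu_\lambda$. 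Since this holds for all bounded continuous $f$, we conclude $\mu_\lambda^{(n)} \to \tilde\mu_\lambda$ weakly.

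For the absolute continuity I would invoke the full-rank hypothesis, i.e.\ that $q : K_\mathbf{R} \to Q_\mathbf{R}$ is surjective. Setting $L = \ker(q|_{K_\mathbf{R}})$ and choosing a Fubini decomposition of $\lambda_{K_\mathbf{R}}$ along $K_\mathbf{R} \to K_\mathbf{R}/L \cong Q_\mathbf{R}$, the push-forward $\tilde\mu_\lambda = q_* \lambda_{C^\lambda}$ acquires, up to a constant Jacobian factor, the density $\beta \mapsto \mathrm{vol}\bigl(C^\lambda \cap q^{-1}(\beta)\bigr)$, where $\mathrm{vol}$ denotes Lebesgue measure on the affine fiber. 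Boundedness of $C^\lambda$ makes this density bounded and supported on the compact set $q(C^\lambda)$, hence integrable; therefore $\tilde\mu_\lambda$ is absolutely continuous with respect to Lebesgue measure on $Q_\mathbf{R}$.

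I expect the genuine work to lie in the Riemann-sum step: one must control, uniformly in $n$, the contribution of the lattice points near $\partial C^\lambda$, and it is precisely the boundedness of $C^\lambda$ — the defining property of a chopped and sliced cone — that makes this contribution negligible. By contrast, the homogeneity $C^{n\lambda} = n\,C^\lambda$ and the full-rank hypothesis are routine once set up: the former converts the $n$-dependent family into an honest scaling limit, and the latter is exactly what prevents the limiting measure from concentrating on a lower-dimensional subset of $Q_\mathbf{R}$.
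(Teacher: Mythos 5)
Your proposal is correct and follows essentially the same route as the paper: the scaling identity $\frac{1}{n}C^{n\lambda} = C^\lambda$ reduces everything to a Riemann-sum limit over $C^\lambda \cap \frac{1}{n}K$, and absolute continuity comes from a Fubini decomposition of $\lambda_{K_\mathbf{R}}$ along a section of $q$, exactly as in the paper. The only difference is organizational: the paper factors the argument into two lemmas (lattice-counting measures converge weakly to $\lambda_{C^\lambda}$ on $K_\mathbf{R}$, and push-forward commutes with weak limits), whereas you inline both by testing directly against $f \circ q$ — a merely cosmetic distinction.
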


For the proof, we provide the following fancy version of the definition of the Riemann integral:

\begin{lemma} \label{le:schwache konvergenz}
Let $M \subset K_\mathbf{R}$ be a bounded set such that the characteristic function $1_M$ of $M$ is Riemann integrable.
Then $\frac{1}{n^{\mathrm{rk}(K)}} \delta_{M \cap \frac{1}{n}K}$ converges weakly towards $\lambda_M$ for $n \to \infty$.
\end{lemma}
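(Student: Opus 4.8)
The plan is to test both measures against continuous functions and to recognize the left-hand side as a Riemann sum over the refining lattice $\frac1n K$. Write $d = \mathrm{rk}(K)$. Since $M$ is bounded, all of the measures $\frac{1}{n^d}\delta_{M \cap \frac1n K}$ as well as $\lambda_M$ are supported in the fixed compact set $\overline M$; consequently weak convergence is equivalent to the convergence of $\int f \, d\bigl(\frac{1}{n^d}\delta_{M\cap\frac1n K}\bigr)$ towards $\int f \, d\lambda_M$ for every $f \in C_c(K_\mathbf{R})$ (a general bounded continuous test function may be multiplied by a cutoff that is $1$ on $\overline M$ without changing any of the integrals, which also takes care of convergence of total masses). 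Fix such an $f$ and put $g = 1_M \cdot f$. Because $1_M$ is Riemann integrable and $f$ is continuous and bounded, the product $g$ is a bounded, boundedly supported, Riemann integrable function with $\int f \, d\lambda_M = \int_{K_\mathbf{R}} g \, d\lambda_{K_\mathbf{R}}$. On the other side,
\[ \int f \, d\Bigl( \tfrac{1}{n^d}\delta_{M \cap \frac1n K} \Bigr) = \frac{1}{n^d} \sum_{x \in M \cap \frac1n K} f(x) = \frac{1}{n^d} \sum_{x \in \frac1n K} g(x), \]
so the claim reduces to showing that the lattice sums $\frac{1}{n^d}\sum_{x \in \frac1n K} g(x)$ converge to $\int_{K_\mathbf{R}} g \, d\lambda_{K_\mathbf{R}}$.

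This is exactly a statement about convergence of Riemann sums. I would fix a half-open fundamental parallelepiped $P$ for $K$ containing $0$, so that $\lambda_{K_\mathbf{R}}(P) = 1$ and the translates $x + \frac1n P$, $x \in \frac1n K$, tile $K_\mathbf{R}$ by cells of volume $n^{-d}$ and diameter $\mathrm{diam}(P)/n \to 0$. Since each tag $x$ lies in its own cell $x + \frac1n P$, the sum is squeezed between the associated lower and upper Darboux sums,
\[ \frac{1}{n^d} \sum_{x} \inf_{x + \frac1n P} g \ \leq\ \frac{1}{n^d} \sum_{x \in \frac1n K} g(x) \ \leq\ \frac{1}{n^d} \sum_{x} \sup_{x + \frac1n P} g , \]
where only the finitely many cells meeting $\mathrm{supp}(g)$ contribute. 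It then remains to show that both outer sums tend to $\int_{K_\mathbf{R}} g \, d\lambda_{K_\mathbf{R}}$.

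For that last step I would reduce to the textbook situation by choosing a linear isomorphism $K_\mathbf{R} \to \mathbf{R}^d$ carrying $K$ to $\mathbf{Z}^d$ and $P$ to the unit cube $[0,1)^d$; this carries the covolume-$1$ measure $\lambda_{K_\mathbf{R}}$ to ordinary Lebesgue measure and preserves Riemann integrability, since a linear homeomorphism maps Lebesgue-null sets to Lebesgue-null sets and hence, by Lebesgue's criterion, the image of $g$ is again Riemann integrable with the same integral. After this reduction the two outer sums are precisely the upper and lower Darboux sums of $g$ for the uniform cube partition of mesh $\sqrt{d}/n \to 0$, and by the standard fact that Darboux sums along any sequence of partitions of mesh tending to zero converge to the corresponding Darboux integrals, both tend to the common value $\int g$. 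The squeeze then finishes the proof. The one point demanding care — the \emph{main obstacle} — is precisely this passage from the textbook Riemann integral, built from axis-parallel boxes, to sums over the possibly skew lattice $\frac1n K$; the coordinate change above is what makes the two notions line up, and one must check that neither Riemann integrability nor the value of the integral is affected by it.
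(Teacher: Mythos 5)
Your proof is correct and takes essentially the same approach as the paper: test against continuous functions, rewrite $\int f\,d\bigl(\tfrac{1}{n^{\mathrm{rk}(K)}}\delta_{M\cap\frac1n K}\bigr)$ as the lattice sum $\tfrac{1}{n^{\mathrm{rk}(K)}}\sum_{x\in\frac1n K}(1_Mf)(x)$, and identify this as a Riemann sum of the Riemann integrable function $1_Mf$ converging to $\int 1_Mf\,d\lambda_{K_\mathbf{R}} = (f,\lambda_M)$. The only difference is one of detail: the paper asserts the Riemann-sum convergence outright, whereas you justify it via the Darboux squeeze and the linear change of coordinates carrying $K$ to $\mathbf{Z}^{\mathrm{rk}(K)}$ --- a reasonable elaboration, since sums over the possibly skew lattice $\frac1n K$ are not literally textbook Riemann sums.
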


\begin{proof}
We denote the canonical pairing between functions and measures by $(\;\;{,}\;\;)$, i.e.\ $(f, \mu) = \int f \mu = \int f(x) \mu(dx)$.
Let $f \in C_b(K_\mathbf{R}, \mathbf{R})$ be an arbitrary continuous bounded function on $K_\mathbf{R}$.
We have to show that  $(f, \frac{1}{n^{\mathrm{rk}(K)}} \delta_{M \cap \frac{1}{n}K}) \to (f, \lambda_M)$ for $n \to \infty$.
For any $n \in \mathbf{Z}_{>0}$ we have
\[ \Bigl( f, \frac{1}{n^{\mathrm{rk}(K)}} \delta_{M \cap \frac{1}{n}K} \Bigr)
= \sum_{a \in M \cap \frac{1}{n}K} \frac{f(a)}{n^{\mathrm{rk}(K)}}
= \sum_{a \in \frac{1}{n}K} \frac{1_M(a) f(a)}{n^{\mathrm{rk}(K)}}. \]
As $1_M$ is Riemann integrable and $f$ is continuous, the product $1_M f$ is Riemann integrable and the above expression is a Riemann sum which converges for $n \to \infty$ towards $\int_{K_\mathbf{R}} 1_M(x) f(x) \, dx$.
As the Riemann integral and the Lebesgue integral coincide for Riemann integrable functions, we get
\begin{align*}
\Bigl( f, \frac{1}{n^{\mathrm{rk}(K)}} \delta_{M \cap \frac{1}{n}K} \Bigr)
&\to \int_{K_\mathbf{R}} 1_M(x) f(x) \, dx \\
&= \int_{K_\mathbf{R}} 1_M(x) f(x) \lambda_{K_\mathbf{R}}(dx) \\
&= \int_{K_\mathbf{R}} f(x) \lambda_M(dx) = (f, \lambda_M)
\end{align*}
for $n \to \infty$ as required.
\end{proof}

We also use the obvious fact that weak convergence and push-forward of measures commute:

\begin{lemma}
\label{le:schwache konvergenz des bildmasses}
Let $\phi : X \to Y$ be a continuous map between metric spaces and $(\mu_n)_{n \in \mathbf{Z}_{>0}}$ a sequence of finite measures on $X$ which converges weakly towards $\mu$.
Then the sequence $(\phi_* \mu_n)_{n \in \mathbf{Z}_{>0}}$ converges weakly towards $\phi_* \mu$.
\end{lemma}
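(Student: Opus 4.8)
The plan is to unwind the definition of weak convergence on $Y$ and reduce it, via the change-of-variables formula for the push-forward, to the weak convergence of $(\mu_n)$ that is already assumed on $X$. Recall that $(\phi_* \mu_n)$ converging weakly towards $\phi_* \mu$ means that $(g, \phi_* \mu_n) \to (g, \phi_* \mu)$ for every $g \in C_b(Y, \mathbf{R})$, where $(\cdot, \cdot)$ denotes the pairing between bounded continuous functions and finite measures introduced in the proof of Lemma~\ref{le:schwache konvergenz}.

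So I would fix an arbitrary $g \in C_b(Y, \mathbf{R})$ and invoke the defining property of the push-forward, namely that $(g, \phi_* \nu) = (g \circ \phi, \nu)$ for any finite measure $\nu$ on $X$:
\[ (g, \phi_* \nu) = \int_Y g \, d(\phi_* \nu) = \int_X (g \circ \phi) \, d\nu = (g \circ \phi, \nu). \]
Applying this identity to $\nu = \mu_n$ and to $\nu = \mu$, the desired convergence $(g, \phi_* \mu_n) \to (g, \phi_* \mu)$ becomes the statement that $(g \circ \phi, \mu_n) \to (g \circ \phi, \mu)$.

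To conclude, I would check that $g \circ \phi$ is an admissible test function on $X$: it is continuous as a composition of the continuous maps $\phi$ and $g$, and it is bounded since $\lvert g \circ \phi \rvert \leq \sup_Y \lvert g \rvert < \infty$, so that $g \circ \phi \in C_b(X, \mathbf{R})$. The assumed weak convergence $\mu_n \to \mu$, tested against $g \circ \phi$, then delivers exactly $(g \circ \phi, \mu_n) \to (g \circ \phi, \mu)$, which completes the argument. There is no genuine obstacle in this proof; the single point deserving a moment's attention is precisely this verification that $g \circ \phi$ lies in $C_b(X, \mathbf{R})$, which is where the continuity of $\phi$ (for the continuity of the composite) and the boundedness of $g$ (to keep the composite bounded) enter. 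The metric-space hypothesis itself is inessential beyond ensuring that weak convergence is phrased against the class $C_b$.
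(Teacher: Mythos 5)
Your proof is correct and complete: the change-of-variables identity $(g, \phi_*\nu) = (g \circ \phi, \nu)$ together with the observation that $g \circ \phi \in C_b(X, \mathbf{R})$ is exactly what is needed. The paper itself offers no proof of this lemma (it is dismissed as ``the obvious fact that weak convergence and push-forward of measures commute''), and your argument is precisely the standard one the author implicitly relies on.
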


Given these lemmas, we can directly verify Theorem \ref{th:schwache konvergenz}:

\begin{proof}[Proof of Theorem \ref{th:schwache konvergenz}]
By definition
\[ \mu_\lambda^{(n)}
= \frac{1}{n^{\mathrm{rk}(K)}} \bigl( \tfrac{1}{n} \mathrm{id}_{Q_\mathbf{R}} \bigr)_{\!*} \, \mu_{n\lambda}
= \frac{1}{n^{\mathrm{rk}(K)}} \bigl( \tfrac{1}{n} \mathrm{id}_{Q_\mathbf{R}} \bigr)_{\!*} \, q_* \delta_{C^{n\lambda} \cap K}. \]
As $q$ is linear
\[ \bigl( \tfrac{1}{n} \mathrm{id}_{Q_\mathbf{R}} \bigr)_{\!*} \, q_* = q_* \bigl( \tfrac{1}{n} \mathrm{id}_{K_\mathbf{R}} \bigr)_{\!*} \]
and furthermore
\[ \bigl( \tfrac{1}{n} \mathrm{id}_{K_\mathbf{R}} \bigr)_{\!*} \, \delta_{C^{n\lambda} \cap K}
= \delta_{\frac{1}{n} (C^{n\lambda}\cap K)}
= \delta_{C^{\lambda} \cap \frac{1}{n}K}, \]
so altogether
\[ \mu_\lambda^{(n)}
= q_* \Bigl( \frac{1}{n^{\mathrm{rk}(K)}} \delta_{C^\lambda \cap \frac{1}{n}K} \Bigr). \]
By Lemma \ref{le:schwache konvergenz}, $\frac{1}{n^{\mathrm{rk}(K)}} \delta_{C^\lambda \cap \frac{1}{n}K}$ converges weakly towards $\lambda_{C^\lambda}$ and by Lemma \ref{le:schwache konvergenz des bildmasses} we obtain the first statement.

By definition, $\lambda_{C^\lambda}$ is $\lambda_{K_\mathbf{R}}$-absolutely continuous on $K_\mathbf{R}$ with density $1_{C^\lambda}$, the characteristic function of $C^\lambda$.
We choose a section of the linear map $q : K_\mathbf{R} \to Q_\mathbf{R}$.
This yields an isomorphism $K_\mathbf{R} \cong Q_\mathbf{R} \oplus \mathrm{ker}(q)$ and all the fibers $q^{-1}(\{\beta\})$ of $q$ are identified canonically with $\mathrm{ker}(q)$.
Let the Lebesgue measure on $Q_\mathbf{R}$ be normalized such that $Q$ has covolume $1$.
Let the Lebesgue measure on $\mathrm{ker}(q)$ be normalized such that $\lambda_{K_\mathbf{R}}$ is the product measure of both.
By Fubini's theorem it follows that for any measurable set $A \subset Q_\mathbf{R}$ we have
\begin{align*}
\int_A \tilde\mu_\lambda
&= \int_{q^{-1}(A)} 1_{C^\lambda} \lambda_{K_\mathbf{R}} \\
&= \int_A \biggl( \int_{q^{-1}(\{\beta\})} 1_{C^\lambda_\beta} \lambda_{q^{-1}(\{\beta\})} \biggr) \lambda_{Q_\mathbf{R}}(d\beta) \\
&= \int_A \biggl( \int_{C^\lambda_\beta} \lambda_{q^{-1}(\{\beta\})} \biggr) \lambda_{Q_\mathbf{R}}(d\beta),
\end{align*}
i.e.\ $f_\lambda(\beta) = \int_{C^\lambda_\beta} \lambda_{q^{-1}(\{\beta\})}$ is the density of $\tilde\mu_\lambda$ with respect to $\lambda_{Q_\mathbf{R}}$.
\end{proof}

\subsection{Cones and vector partition functions}

We start by recalling some definitions:
Let $Y$ be a free Abelian group of finite rank and $X_+$ a free Abelian monoid of finite rank.
Let $X$ be the free Abelian group generated by a basis of $X_+$.
Let $E : X \to Y$ be a homomorphism such that $\ker E \cap X_+ = \{ 0 \}$.
One defines the \emph{vector partition function} $\Phi_E : Y \to \mathbf{Z}_+$ associated with $E$ by $\Phi_E(y) = \lvert \{ x \in X_+ : Ex = y \} \rvert$.
A function $f : Y \to \mathbf{Z}_+$ is called a \emph{quasi-polynomial} if there is a subgroup $Z \subset Y$ of finite index and a family $(f_{\bar y})_{y \in Y/Z}$ of polynomial functions on $Y$ such that $f(y) =  f_{\bar y}(y)$ for all $y \in Y$.
A \emph{fan} in $Y$ is a set $F$ of rational convex polyhedral cones in $Y_\mathbf{R}$ such that any face of a cone in $F$ is itself contained in $F$ and such that the intersection of any two cones in $F$ is a face of both.

\begin{theorem}
\label{th:kegel-als-vpf}
Let $C \subset K$ be a pointed chopped and sliced cone.
\begin{enumerate}
\item 
There are free Abelian groups $X$ and $Y$ and morphisms $E : X \to Y$ and $B : \Lambda \times Q \to Y$ such that $\lvert C^\lambda_\beta \cap K \rvert = \Phi_E \bigl( B \bigl( \begin{smallmatrix} \lambda \\ \beta \end{smallmatrix} \bigr) \bigr)$.
\item There is a fan $F$ in $\Lambda \times Q$ such that the function $\Lambda \times Q \to \mathbf{Z}_+$, $(\lambda, \beta) \mapsto \lvert C^\lambda_\beta \cap K \rvert$ is quasi-polynomial on any of the maximal cones of $F$ and vanishes outside $F$.
The fan $F$ and the quasi-polynomials associated with its maximal cones are effectively computable.
\end{enumerate}
\end{theorem}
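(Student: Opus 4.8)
The plan is to prove (1) by presenting $\lvert C^\lambda_\beta\cap K\rvert$ as a fibre count of a vector partition function, and then to obtain (2) by transporting the structure theorem for such functions through the linear map $B$.

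\emph{Construction for (1).} Since $C$ is pointed it contains no line, and as $r(v)=0$ would force the entire line $\mathbf{R}v$ into $C$, the map $r$ must be injective. First I would replace the inequality $p(x)\le\lambda$ by the equation $p(x)+\sigma=\lambda$ with a slack variable $\sigma\in\Lambda_+$; a lattice point $x$ of $C^\lambda_\beta$ then corresponds to the unique such $\sigma=\lambda-p(x)$. Because $r$ is injective, $x$ is recovered from $\rho:=r(x)$, so $x\mapsto(\rho,\sigma)$ identifies $C^\lambda_\beta\cap K$ with the pairs $(\rho,\sigma)\in R_+\oplus\Lambda_+$ for which there exists $x\in K$ with $r(x)=\rho$, $p(x)=\lambda-\sigma$ and $q(x)=\beta$. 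Equivalently, writing $\iota=(r,p,q)\colon K\to R\oplus\Lambda\oplus Q$ (which is injective), the pair must satisfy $(\rho,\lambda-\sigma,\beta)\in\iota(K)$. I therefore set $X=R\oplus\Lambda$, $X_+=R_+\oplus\Lambda_+$, let $\pi$ denote the quotient map onto $Y=(R\oplus\Lambda\oplus Q)/\iota(K)$, and define $E\colon X\to Y$ and $B\colon\Lambda\times Q\to Y$ to be the two linear constituents of $\pi$, so that $E(\rho,\sigma)=B\bigl(\begin{smallmatrix}\lambda\\\beta\end{smallmatrix}\bigr)$ holds precisely when $(\rho,\lambda-\sigma,\beta)\in\iota(K)$. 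The displayed bijection then gives $\lvert C^\lambda_\beta\cap K\rvert=\Phi_E\bigl(B\bigl(\begin{smallmatrix}\lambda\\\beta\end{smallmatrix}\bigr)\bigr)$.

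Two points remain, the first of which I expect to be the main obstacle. \emph{(i)} The theorem asks for $E$ and $B$ into a \emph{free} group $Y$, whereas $Y=(R\oplus\Lambda\oplus Q)/\iota(K)$ may have torsion exactly when $\iota(K)$ is not saturated (already $r=2\cdot\mathrm{id}$ produces such torsion). The torsion cannot simply be discarded, since mapping only to the free quotient would detect the saturation of $\iota(K)$ rather than $\iota(K)$ itself and would overcount; instead one must choose bases adapting $\iota$ to $R\oplus\Lambda\oplus Q$ (Smith normal form) and clear the resulting elementary divisors, turning the membership $(\rho,\lambda-\sigma,\beta)\in\iota(K)$ into an honest system of integral linear equations, in the spirit of the change of variables of Billey, Guillemin and Rassart \cite{billey2004}; this is the delicate bookkeeping that makes $E$ and $B$ integral with $Y$ free. \emph{(ii)} To know $\Phi_E$ is a vector partition function one needs $\ker E\cap X_+=\{0\}$. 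By construction the elements of $\ker E\cap X_+$ correspond to $C^0_0\cap K$; since $C^0_0$ is defined by the homogeneous conditions $r(x)\ge0$, $p(x)\le0$, $q(x)=0$ it is a cone, and it is bounded because $C$ is chopped and sliced, so $C^0_0=\{0\}$ and the hypothesis holds.

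\emph{Deduction of (2).} Given (1), I would invoke the structure theorem for vector partition functions \cite{blakley1964,dahmen1988,sturmfels1995}: there is a fan $F_Y$ in $Y_\mathbf{R}$, supported on the cone spanned by the images under $E$ of the generators of $X_+$, on whose maximal cones $\Phi_E$ is quasi-polynomial and outside of which $\Phi_E$ vanishes. As $B$ is linear, the preimages $B^{-1}(\tau)$ for $\tau\in F_Y$ are rational convex polyhedral cones closed under passing to faces and to intersections, hence constitute a fan $F$ in $(\Lambda\times Q)_\mathbf{R}$. On each maximal cone of $F$ the function $(\lambda,\beta)\mapsto\Phi_E\bigl(B\bigl(\begin{smallmatrix}\lambda\\\beta\end{smallmatrix}\bigr)\bigr)$ is a quasi-polynomial composed with the linear map $B$, hence quasi-polynomial, while outside $F$ the argument leaves the support of $\Phi_E$ and the value is $0$; by (1) this function equals $\lvert C^\lambda_\beta\cap K\rvert$. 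Finally, effectivity propagates through the argument: the chamber complex of a vector partition function and its constituent quasi-polynomials are computable, the integral matrices of $E$ and $B$ are produced explicitly from $p$, $q$, $r$ by the Smith normal form reduction, and pulling a fan back along an explicit linear map is effective, so $F$ and its quasi-polynomials are effectively computable.
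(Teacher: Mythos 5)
Your deduction of (2) from (1) --- Sturmfels' structure theorem, pulling the fan back along $B$, composing the quasi-polynomials with $B$ --- is exactly the paper's argument, and your check that $\ker E \cap X_+ = \{0\}$ via boundedness of the homogeneous slice $C^0_0$ is sound. The gap is in your construction for (1), at precisely the point you flag as ``the main obstacle'': it is not delicate bookkeeping but an actual impossibility within your setup. Suppose $E$ and $B$ map into a torsion-free group $Y$ and satisfy your defining property that $E(\rho,\sigma)=B\bigl(\begin{smallmatrix}\lambda\\\beta\end{smallmatrix}\bigr)$ holds exactly when $(\rho,\lambda-\sigma,\beta)\in\iota(K)$. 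Setting $\sigma=0$ and defining the homomorphism $g : R\oplus\Lambda\oplus Q \to Y$, $g(\rho,u,\beta)=E(\rho,0)-B\bigl(\begin{smallmatrix}u\\\beta\end{smallmatrix}\bigr)$, your property says $\ker g = \iota(K)$. But the kernel of any homomorphism into a torsion-free group is saturated: if $nv\in\ker g$ then $n\,g(v)=0$, hence $g(v)=0$. So such $E, B$ with $Y$ free cannot exist whenever $\iota(K)$ is not saturated --- which, as you yourself observe, does happen. Smith normal form cannot ``clear'' the elementary divisors: a divisibility condition $d_i \mid z_i$ is not cut out by integral linear forms with values in a free group, and reinstating it via a witness $z_i = d_i w_i$ makes $w_i$ an unconstrained integer variable, which leaves the monoid framework (the fibers acquire the translates $(w_i^+ + t, w_i^- + t)$, so $\ker E\cap X_+=\{0\}$ fails and the counts become infinite). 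In short, your choice $X = R\oplus\Lambda$, obtained by eliminating $x$ through the injectivity of $r$, cannot be completed to a proof.

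The repair is to keep $x$ as a monoid variable instead of eliminating it, and this is where pointedness is actually used in the paper: by Lemma \ref{lebla}, a unimodular change of coordinates $A \in \mathit{GL}_n(\mathbf{Z})$ puts the pointed cone $C$ inside the positive orthant, so one may assume $C\subset K_+$. Then any witness $x$ of $(\rho,\lambda-\sigma,\beta)\in\iota(K)$ with $\rho\geq 0$ lies in $C$, hence automatically in $K_+$, and one counts triples $(x,\rho,\sigma)\in K_+\times \tilde R_+\times\tilde\Lambda_+$ subject to $\tilde r(x)-\rho=0$, $p(x)+\sigma=s(\lambda)$, $q(x)=\beta$; since $(\rho,\sigma)$ is uniquely determined by $x$, this count equals $\lvert C^\lambda_\beta\cap K\rvert$. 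Here the target $Y=\tilde R\times\tilde\Lambda\times Q$ is free because no quotient is ever taken --- membership in $\iota(K)$ has been traded for genuine equations in the enlarged variable set. This is exactly the paper's proof; with that replacement, your point \emph{(ii)} and your derivation of (2) go through verbatim.
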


For the proof, let $C$ be a pointed chopped and sliced cone.
We use the symbols $R$, $\Lambda$, $\tilde\Lambda$, $Q$, $p$, $q$, $r$, $s$ as in \S \ref{Def. of a CSC}.
As of the free Abelian group $K$, fix a free Abelian monoid $K_+$ of finite rank and let $K$ be the free Abelian group generated by a basis of $K_+$.
Because of the following lemma we can suppose that $C \subset K_+$.

\begin{lemma}
\label{lebla}
Let $C \subset \mathbf{R}^n$ be a rational pointed convex polyhedral cone. Then there is a matrix $A \in \mathit{GL}_n(\mathbf{Z})$ such that $AC \subset \mathbf{R}_+^n$.
\end{lemma}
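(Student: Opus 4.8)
The plan is to dualize. Recall that $C \subset \mathbf{R}^n$ being pointed is equivalent to its dual cone $C^\vee = \{ y \in \mathbf{R}^n : \langle y, x \rangle \geq 0 \text{ for all } x \in C \}$ being full-dimensional, and that $C^\vee$ is again a rational polyhedral cone. The requirement $AC \subset \mathbf{R}_+^n$ says exactly that each row $a_i$ of $A$ satisfies $\langle a_i, x \rangle \geq 0$ for all $x \in C$, i.e. $a_i \in C^\vee$. Hence the lemma is equivalent to the following assertion: the full-dimensional rational cone $C^\vee$ contains a $\mathbf{Z}$-basis of $\mathbf{Z}^n$. Indeed, taking such a basis as the rows of $A$ produces a matrix $A \in \mathit{GL}_n(\mathbf{Z})$ with $AC \subset \mathbf{R}_+^n$, as desired.

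So it remains to prove the auxiliary statement: every full-dimensional rational cone $D \subset \mathbf{R}^n$ contains a basis of $\mathbf{Z}^n$. Since $D$ is full-dimensional, its interior is a nonempty open cone, so I can first choose $n$ linearly independent lattice vectors $v_1, \dots, v_n \in D$ (take rational interior points and scale them to be integral). These span a sublattice $L = \mathbf{Z} v_1 + \dots + \mathbf{Z} v_n \subset \mathbf{Z}^n$ of finite index $d = \lvert \det(v_1, \dots, v_n) \rvert$. If $d = 1$ we are done.

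The key step is a descent on $d$. Suppose $d > 1$. The half-open parallelepiped $P = \{ \sum_i t_i v_i : 0 \leq t_i < 1 \}$ is a fundamental domain for $L$ and therefore contains exactly $d \geq 2$ points of $\mathbf{Z}^n$. Since one of these is $0$, I may pick $w \in P \cap \mathbf{Z}^n$ with $w \neq 0$, say $w = \sum_i t_i v_i$ with $0 \leq t_i < 1$ and $t_j > 0$ for some $j$. Because $D$ is a cone containing all the $v_i$ and the $t_i$ are nonnegative, the point $w$ again lies in $D \cap \mathbf{Z}^n$. Replacing $v_j$ by $w$ yields $n$ lattice vectors still lying in $D$ whose determinant, by multilinearity, equals $t_j \det(v_1, \dots, v_n)$, of absolute value $t_j d$. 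As a determinant of integer vectors this is a positive integer, and it is strictly smaller than $d$ since $t_j < 1$. Iterating the replacement therefore reduces $d$ to $1$ in finitely many steps, producing a $\mathbf{Z}$-basis of $\mathbf{Z}^n$ contained in $D$.

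The essential point, and the only place where a genuine argument is required, is this descent: a nonzero lattice point of the fundamental parallelepiped automatically lies in $D$ (being a nonnegative combination of the cone generators $v_i$) while strictly decreasing the index. The duality equivalence ``$C$ pointed $\iff$ $C^\vee$ full-dimensional'' and the existence of the initial independent lattice vectors are standard facts about rational cones, so I expect no real obstacle there.
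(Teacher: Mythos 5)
Your proof is correct, but it takes a genuinely different route from the paper's. You dualize: since $AC \subset \mathbf{R}_+^n$ means exactly that the rows of $A$ lie in the dual cone $C^\vee$, and $A \in \mathit{GL}_n(\mathbf{Z})$ means the rows form a $\mathbf{Z}$-basis of $\mathbf{Z}^n$, the lemma becomes the statement that the full-dimensional rational cone $C^\vee$ contains a lattice basis. You then prove this by the classical index descent: starting from $n$ independent lattice vectors in the cone with index $d = \lvert \det(v_1,\dots,v_n)\rvert$, as long as $d > 1$ you swap some $v_j$ for a nonzero lattice point $w = \sum_i t_i v_i$ of the half-open fundamental parallelepiped --- which stays in the cone, being a nonnegative combination of the $v_i$ --- and the multilinearity computation $\det(\dots,w,\dots) = t_j\det(v_1,\dots,v_n)$ shows the index drops to the positive integer $t_j d < d$, so the iteration terminates with a basis. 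The paper instead works on the primal side with matrix algorithms: it encloses $C$ in a simplicial cone $B\mathbf{R}_+^n$ with $B$ integral, uses the elementary divisors algorithm to write $XBP$ as (unipotent upper triangular) $\times$ (positive diagonal) with $X \in \mathit{GL}_n(\mathbf{Z})$ and $P$ a permutation, absorbs the diagonal, and compares the unipotent factor $Y'$ with an integral unipotent $Y'' \leq Y'$ to conclude $C \subset X^{-1}Y''\mathbf{R}_+^n$, so that $A = Y''^{-1}X$ is produced explicitly from a Hermite-normal-form computation. Both arguments are effective; the paper's reads off $A$ directly from standard matrix algorithms, while yours isolates a cleaner, more conceptual fact (the dual of a pointed rational cone is full-dimensional and every full-dimensional rational cone contains a $\mathbf{Z}$-basis) whose descent step is elementary but requires locating lattice points in parallelepipeds. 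The standard facts you defer to (pointedness of $C$ is equivalent to full-dimensionality of $C^\vee$, rationality of $C^\vee$, and the fact that a fundamental domain of a sublattice of index $d$ contains exactly $d$ points of $\mathbf{Z}^n$) are indeed standard, so there is no gap.
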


\begin{proof}
Every pointed convex polyhedral cone is contained in a simplicial one, so there is an integral matrix $B \in \mathbf{Z}^{n \times n}$ such that $C \subset B\mathbf{R}_+^n$.
By the elementary divisors algorithm, there is a matrix $X \in \mathit{GL}_n(\mathbf{Z})$ and a permutation matrix $P$ such that $Y = XBP$ is upper triangular with positive components on the diagonal.
Write $Y = Y' D$, where $D$ is a positive diagonal matrix and $Y'$ is unipotent.
Choose a unipotent integral upper triangular matrix $Y''$ such that $Y'' \leq Y'$ (component wise).
Then $Y' \mathbf{R}_+^n \subset Y'' \mathbf{R}_+^n$.
Hence $C \subset B\mathbf{R}_+^n = X^{-1} Y P^{-1} \mathbf{R}_+^n = X^{-1} Y \mathbf{R}_+^n = X^{-1} Y' D \mathbf{R}_+^n = X^{-1} Y' \mathbf{R}_+^n \subset X^{-1} Y'' \mathbf{R}_+^n$.
So $A = Y''^{-1} X$ has the desired property.
\end{proof}

\begin{proof}[Proof of Theorem \ref{th:kegel-als-vpf}]
By Lemma \ref{lebla}, suppose that $C \subset K_+$.
Hence, the map $r : K \to R$ defining $C$ can be replaced by a map $\tilde r : K \to \tilde R$ to a free Abelian group of potentially lower rank by omitting the inequalities defining $K_+$.

For $\lambda \in \Lambda$, $\beta \in Q$ we have
\begin{align*}
\lvert C^\lambda_\beta \cap K \rvert
&=
\lvert \{ x \in K_+ : \tilde r(x) \geq 0,\ p(x) \leq s(\lambda),\ q(x) = \beta \} \rvert \\ \notag
&=
\begin{aligned}[t] \lvert \{ x \in K_+ : {} & \exists y \in \tilde R_+, z \in \tilde \Lambda_+ :
\tilde r(x) - y = 0, \\ & p(x) + z = s(\lambda),\ q(x) = \beta \} \rvert
\end{aligned} \\ \notag
&=
\Phi_E \begin{pmatrix} 0_{\tilde R} \\ s(\lambda) \\ \beta \end{pmatrix}
\end{align*}
for
\[ E =
\begin{pmatrix}
\tilde r & -\mathrm{id}_{\tilde R} & 0 \\
p & 0 & \mathrm{id}_{\tilde \Lambda} \\
q & 0 & 0
\end{pmatrix}
: K \times \tilde R \times \tilde \Lambda \to \tilde R \times \tilde \Lambda \times Q . \]
If we define
\[ \begin{pmatrix} 0_{\tilde R} \\ s(\lambda) \\ \beta \end{pmatrix} =
B \begin{pmatrix} \lambda \\ \beta \end{pmatrix} \]
for
\[ B = \begin{pmatrix} 0 & 0 \\ s & 0 \\ 0 & \mathrm{id}_Q \end{pmatrix}
: \Lambda \times Q \to \tilde R \times \tilde \Lambda \times Q \]
and $X = K \times \tilde R \times \tilde \Lambda$, $Y = \tilde R \times \tilde \Lambda \times Q$, we get part (1).

To show part (2), note that by Sturmfels' theorem \cite{sturmfels1995} there is a fan $\tilde F$ in $Y$ and a family $(g_{\mathfrak{\tilde c}})$ of quasi-polynomials, indexed by the maximal cones of $\tilde F$, such that $\Phi_E = g_{\mathfrak{\tilde c}}$ on $\mathfrak{\tilde c}$.
The fan $\tilde F$ and the quasi-polynomials $g_{\mathfrak{\tilde c}}$ are effectively computable, e.g.\ using an algorithm based on a residue formula \cite[Th.\ 3.1]{szenes2003} as in \cite{baldoni2006} or \cite{bliem2008}.
Let $F$ be the fan consisting of all $B^{-1}(\mathfrak{\tilde c})$ for $\mathfrak{\tilde c} \in \tilde F$.
For all $\mathfrak{c} \in F$ fix a $\mathfrak{\tilde c} \in \tilde F$ such that $\mathfrak{c} = B^{-1}(\mathfrak{\tilde c})$ and let $f_\mathfrak{c} = g_\mathfrak{\tilde c} \circ B$.
Then $F$ and $(f_\mathfrak{c})$ yield the theorem.
\end{proof}

\section{Application to weight multiplicities of Demazure modules}

For Kac-Moody algebras we use the notation as in V. Kac' book \cite{kac1990}, namely:
Let $A = (a_{ij})$ be an $(n \times n)$-generalized Cartan matrix with a realization $(\mathfrak{h}, \Pi, \Pi^\vee)$, i.e. $\Pi = \{ \alpha_1, \ldots, \alpha_n \} \subset \mathfrak{h}^*$, $\Pi^\vee = \{\alpha_1^\vee, \ldots, \alpha_n^\vee \} \subset \mathfrak{h}$ such that $\langle \alpha_i^\vee, \alpha_j \rangle = a_{ij}$. Let $Q \subset \mathfrak{h}^*$ be the root lattice and $P \subset \mathfrak{h}^*$ the weight ``lattice'' with dominant weights $P_+$.
Let $\mathfrak{h}' \subset \mathfrak{h}$ be the vector space generated by $\Pi^\vee$.
Let $\Lambda$ be the quotient of $P$ obtained by restricting linear forms to $\mathfrak{h}'$, and $\Lambda_+$ the image of $P_+$.
Then $Q$ and $\Lambda$ are free Abelian groups of rank $n$.
The restriction $P \to \Lambda$ is denoted by $\lambda \mapsto \bar\lambda$.

\begin{theorem}
\label{th3}
Let $\mathfrak{g}$ be a symmetrizable Kac-Moody algebra and $w \in W$ an element of length $l$ of its Weyl group. Let $K = \mathbf{Z}^l$.
\begin{enumerate}
\item There is a chopped and sliced cone $C \subset K_\mathbf{R}$ with $\Lambda$ and $Q$ as above, such that for all $\lambda \in P_+$  and $\beta \in Q$ we have
\[ \dim V_w(\lambda)_{\lambda-\beta} = \lvert C^{\bar\lambda}_\beta \cap K \rvert. \]
\item
There is an Abelian group $Y$ of finite rank, a homomorphism $B : \Lambda \times Q \to Y$ and a vector partition function $\Phi_E : Y \to \mathbf{Z}_+$ such that
\[ \dim V_w(\lambda)_{\lambda-\beta} = \Phi_E \bigl( B \bigl( \begin{smallmatrix} \bar\lambda \\ \beta \end{smallmatrix} \bigr) \bigr) \]
for all $\lambda \in P_+$, $\beta \in Q$.
\item The weight multiplicity function is piecewise quasi-polynomial.
Cones of quasi-polynomiality and the corresponding quasi-polynomials can be effectively computed.
\end{enumerate}
\end{theorem}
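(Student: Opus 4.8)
The plan is to realize the crystal basis of $V_w(\lambda)$ by Littelmann's string parametrization \cite{littelmann1998}, to recognize the resulting family of polytopes as a chopped and sliced cone, and then to obtain parts (2) and (3) as immediate specializations of Theorem \ref{th:kegel-als-vpf}.

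First I would fix a reduced decomposition $w = s_{i_1} \cdots s_{i_l}$ and set $K = \mathbf{Z}^l$. By \cite{littelmann1998}, the associated string parametrization gives for each dominant $\lambda$ a polytope $C^\lambda \subset K_\mathbf{R}$ whose integral points are in bijection with the crystal basis of $V_w(\lambda)$, the basis vector indexed by $x = (x_1, \ldots, x_l)$ having weight $\lambda - \sum_j x_j \alpha_{i_j}$. Let $q : K \to Q$ be the linear map $x \mapsto \sum_j x_j \alpha_{i_j}$ into the root lattice. Then the basis vectors of weight $\lambda - \beta$ are exactly the integral points of $C^\lambda$ lying in the fiber $q^{-1}(\beta)$, whence $\dim V_w(\lambda)_{\lambda-\beta} = \lvert C^\lambda_\beta \cap K \rvert$. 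Since the string parametrization depends on $\lambda$ only through the pairings $\langle \alpha_i^\vee, \lambda \rangle$, i.e.\ through $\bar\lambda$, this quantity depends on $\lambda$ only through $\bar\lambda$, and I may write $C^{\bar\lambda}$.

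Next I would separate the inequalities defining $C^\lambda$ into a $\lambda$-independent part and a $\lambda$-dependent part. Littelmann's string cone $C = \{ x : r(x) \geq 0 \}$ is cut out by homogeneous inequalities not involving $\lambda$; these supply the map $r$. Moreover $C$ lies in the nonnegative orthant, since the string coordinates are nonnegative, so $C$ is pointed. The remaining, bounding inequalities are affine in the pairings $\langle \alpha_i^\vee, \lambda \rangle$ and can therefore be packaged as $p(x) \leq s(\bar\lambda)$ for suitable homomorphisms $p : K \to \tilde\Lambda$ and $s : \Lambda \to \tilde\Lambda$. This exhibits the full data $(K, \Lambda, Q, \tilde\Lambda_+, R_+, p, q, r, s)$ of Definition \ref{def: chopped and sliced cone}; the cone axiom holds because $V_w(\lambda)$ is finite-dimensional, so each $C^{\bar\lambda}$ is a genuine bounded polytope. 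This gives part (1), and parts (2) and (3) are then the statements of Theorem \ref{th:kegel-als-vpf}(1) and (2) applied to this pointed chopped and sliced cone.

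The step I expect to require the most care is the separation of the defining inequalities: verifying in Littelmann's explicit description that the bounding inequalities are genuinely affine in $\bar\lambda$ and assemble into a single homomorphism $s$ with target a free monoid $\tilde\Lambda_+$, and checking that the entire construction factors through the restriction $\lambda \mapsto \bar\lambda$ rather than through $\lambda \in P$ itself. Once $r$, $p$ and $s$ are matched to the defining inequalities and $q$ to the weight map, the remaining claims are formal consequences of results already established.
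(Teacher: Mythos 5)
Your proposal is correct and takes essentially the same route as the paper: both realize the crystal of $V_w(\lambda)$ via Littelmann's pattern/string parametrization for a fixed reduced decomposition of $w$, extract $r$ from the homogeneous inequalities of the string cone, $p$ and $s$ from the $\bar\lambda$-dependent bounding inequalities, and $q$ from the weight map $x \mapsto \sum_j x_j \alpha_{i_j}$, then obtain parts (2) and (3) by applying Theorem \ref{th:kegel-als-vpf}. Your explicit verification of pointedness (the string coordinates being nonnegative) is a detail the paper leaves implicit, although it is needed to invoke Theorem \ref{th:kegel-als-vpf}.
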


\begin{proof}
Because of Theorem \ref{th:kegel-als-vpf}, parts (2) and (3) follow from (1).
To show (1), in addition to $K$, $\Lambda$ and $Q$ as given above, we have to find $R$, $\tilde\Lambda$, $p$, $q$, $r$, and $s$ as in \S \ref{Def. of a CSC}, such that the stated equality holds.
We deduce this from the existence of P. Littelmann's patterns \cite{littelmann1998}.

Let $(B_w(\pi), (e_i)_{i=1}^n, (f_i)_{i=1}^n)$ be a path model for the crystal graph of $V_w(\lambda)$ as defined in \cite{littelmann1995}.
Fix a reduced decomposition $w = s_{i_1} \cdots s_{i_l}$ of $w$.
Then, for $\lambda \in P_+$, there is a subset $\mathcal{S}^\lambda \subset K$ such that $a \mapsto f_{i_1}^{a_1} \cdots f_{i_l}^{a_l} \pi$ defines a bijection $\mathcal{S}^\lambda \to B_w(\pi)$.
Let $\mathcal{S} = \bigcup_{\lambda\in P_+} \mathcal{S}^\lambda$.
By \cite[Pr.\ 1.5a and Co.\ 1]{littelmann1998}, $\mathcal{S}$ is the set of integral points in a rational convex polyhedral cone $C \subset K_\mathbf{R}$.
Let $R$ be a free Abelian group, the rank being the number of facets of $C$.
Then the inequalities defining $C$ can be stated in the form $r(a) \geq 0$ for a linear map $K_\mathbf{R} \to R_\mathbf{R}$.
As $C$ is rational, one can in fact choose $r : K \to R$.

By \cite[Pr.\ 1.5b]{littelmann1998}, $\mathcal{S}^\lambda$ is the set of integral points in a convex polytope $C^\lambda \subset C$, given as a subset of $C$ by the additional inequalities
\[
a_j + \sum_{k = j+1}^l \langle \alpha_{i_j}^\vee, \alpha_{i_k} \rangle a_k \leq \langle \alpha_{i_j}^\vee, \lambda \rangle
\]
for $j = 1, \ldots, l$.
These are $l$ inequalities, the left hand side of which depends linearly on $a$ and the right hand side of which depends linearly on $\bar\lambda$.
Hence we can choose $\tilde\Lambda = \mathbf{Z}^l$ and $p$, $q$ such that the inequalities can be written in the form $p(a) \leq s(\bar\lambda)$.

Given $a \in \mathcal{S}^\lambda$, the corresponding element $f_{i_1}^{a_1} \cdots f_{i_l}^{a_l} \pi$ has weight $\lambda - \sum_{j=1}^l a_j \alpha_{i_j}$.
So, for fixed $\beta \in Q$, the elements $a \in \mathcal{S}^\lambda$ of weight $\lambda - \beta$ are those satisfying $\sum_{j=0}^l a_j \alpha_{i_j} = \beta$.
The left hand side of this equation depends linearly on $a$, so if we denote this linear map by $q$, we have constructed a chopped and sliced cone having the property stated in the theorem.
\end{proof}

Consider the case where $w$ is the longest element of the Weyl group.
Then, for $\mathfrak{g} = \mathfrak{sl}_k(\mathbf{C})$, this can be proved using classical Gelfand-Tsetlin patterns \cite[Th.\ 2.1]{billey2004}.
For $\mathfrak{g} = \mathfrak{so}_5(\mathbf{C})$, I have explicitly computed the weight multiplicity function, as possible by part (3) of Theorem \ref{th3}, in \cite{bliem2008}.

\section{Application to Littlewood-Richardson coefficients}

In this chapter, we assume that $\mathfrak{g}$ is of finite type, i.e.\ a complex semisimple Lie algebra of rank $n$.
Let $Q \subset P \subset \mathfrak{h}^*$ denote the root lattice respectively the weight lattice.
For $\lambda, \nu \in P$, $\beta \in Q$ let $c^{\lambda\nu}_\beta = [V(\lambda) \otimes V(\nu) : V(\lambda+\nu-\beta)]$ be the multiplicity of $V(\lambda+\nu-\beta)$ in $V(\lambda) \otimes V(\nu)$.

\begin{theorem}
Let $\mathfrak{g}$ be a complex semisimple Lie algebra and $l = \mathrm{length}(w_0)$ the length of the longest element of the Weyl group.
\begin{enumerate}
\item For $\Lambda = P \times P$ and $K = \mathbf{Z}^l$ there is a chopped and sliced cone $C \subset K_\mathbf{R}$ such that for all $(\lambda, \nu) \in \Lambda$ and $\beta \in Q$ we have
\[ c^{\lambda\nu}_\beta = \lvert C^{(\lambda,\nu)}_\beta \cap K \rvert . \]
\item
There is an Abelian group $Y$ of finite rank, a homomorphism $B : P \times P \times Q \to Y$ and a vector partition function $\Phi_E : Y \to \mathbf{Z}_+$ such that
\[ c^{\lambda\nu}_\beta = \Phi_E \bigl( B \Bigl( \begin{smallmatrix} \lambda \\ \nu \\ \beta \end{smallmatrix} \Bigr) \bigr) \]
for all $\lambda, \nu \in P_+$, $\beta \in Q$.
\item The tensor product multiplicity function is piecewise quasi-polynomial.
Cones of quasi-polynomiality and the corresponding quasi-polynomials can be effectively computed.
\end{enumerate}
\end{theorem}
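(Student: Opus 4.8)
The plan is to mirror the proof of Theorem~\ref{th3}. Exactly as there, parts~(2) and~(3) follow from part~(1): once a \emph{pointed} chopped and sliced cone $C$ with the counting property of~(1) has been produced, Theorem~\ref{th:kegel-als-vpf}, applied with $\Lambda = P \times P$, yields the group $Y$, the maps $E$ and $B$, the fan $F$, and the associated quasi-polynomials, which is precisely what~(2) and~(3) assert. The cone we construct will be a string cone, hence contained in $\mathbf{R}_{\geq 0}^l$ and in particular pointed, so the hypothesis of Theorem~\ref{th:kegel-als-vpf} is satisfied. Everything therefore reduces to exhibiting, for $K = \mathbf{Z}^l$, suitable data $R$, $\tilde\Lambda$, $p$, $q$, $r$, $s$ as in \S\ref{Def. of a CSC}.

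For this I would use the polyhedral model of A.~Berenstein and A.~Zelevinsky \cite{berenstein2001} for tensor product multiplicities, equivalently P.~Littelmann's path-model Littlewood--Richardson rule \cite{littelmann1998,littelmann1995}, recast so as to fit Definition~\ref{def: chopped and sliced cone}. Fix a reduced decomposition $w_0 = s_{i_1} \cdots s_{i_l}$ and parametrise the crystal basis of $V(\nu)$ by its string coordinates $a \in \mathbf{Z}^l$ relative to this word, just as in the proof of Theorem~\ref{th3} with $\lambda$ replaced by $\nu$. Then $c^{\lambda\nu}_\beta = [V(\lambda) \otimes V(\nu) : V(\lambda+\nu-\beta)]$ counts those $a$ for which: (i)~$a$ lies in the string cone; (ii)~the associated path has shape $\nu$; (iii)~the concatenation of the straight path to $\lambda$ with that path stays in the dominant Weyl chamber; and (iv)~the endpoint of the concatenated path equals $\lambda + \nu - \beta$. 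Condition~(i) is a homogeneous system $r(a) \geq 0$ independent of $\lambda$, $\nu$, $\beta$ and defines the cone $C$. Conditions~(ii) and~(iii) are finitely many inequalities whose left-hand sides are linear in $a$ and whose right-hand sides are linear in $(\lambda,\nu)$; collecting them as $p(a) \leq s(\lambda,\nu)$, with $\tilde\Lambda = \mathbf{Z}^m$ and $m$ their number, realises the ``chopping''. Condition~(iv) reads $\sum_{j=1}^l a_j \alpha_{i_j} = \beta$, a single linear equation $q(a) = \beta$ which defines the weight map $q : K \to Q$ and realises the ``slicing''. With these choices $c^{\lambda\nu}_\beta = \lvert C^{(\lambda,\nu)}_\beta \cap K \rvert$.

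It then remains to check the boundedness required by Definition~\ref{def: chopped and sliced cone}: for every $(\lambda,\nu)$ the polytope $C^{(\lambda,\nu)}$ must be bounded. This holds because the shape inequalities~(ii) alone confine $a$ to the bounded polytope describing the crystal of $V(\nu)$ --- the same boundedness argument as in Theorem~\ref{th3} --- while the dominance inequalities~(iii) only cut this polytope down further. Hence $C$ is a pointed chopped and sliced cone, and the reduction above finishes the proof.

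I expect condition~(iii) to be the main obstacle. For each simple root $\alpha_i$ the dominance requirement amounts to $\varepsilon_i \leq \langle \alpha_i^\vee, \lambda \rangle$, where $\varepsilon_i$ is, a priori, only a piecewise-linear function of the string coordinates; one must verify that on the string cone $\varepsilon_i$ is a \emph{maximum} of finitely many linear forms --- the $i$-trail forms of \cite{berenstein2001} --- so that the single constraint $\varepsilon_i \leq \langle \alpha_i^\vee, \lambda \rangle$ unfolds into finitely many genuine linear inequalities, each with right-hand side affine-linear in $(\lambda,\nu)$. Establishing this, together with the fact that (i)--(iv) cut out exactly the counting set, is where the real work lies; once it is available, the assembly of $r$, $p$, $q$, $s$ and the verification of the cone axioms are routine.
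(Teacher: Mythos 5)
Your overall architecture matches the paper's: parts (2) and (3) are reduced to part (1) via Theorem \ref{th:kegel-als-vpf}, and part (1) is to be obtained from a polyhedral model attached to a reduced word for $w_0$, with the homogeneous inequalities giving $r$, the $(\lambda,\nu)$-dependent ones giving $p$ and $s$, and the equation $\sum_k a_k \alpha_{i_k} = \beta$ giving $q$. The difference lies in how part (1) is established, and there your proposal has a genuine gap which you yourself flag: the dominance condition (iii) is, in string coordinates, a piecewise-linear constraint $\varepsilon_i \leq \langle \alpha_i^\vee, \lambda\rangle$, and the assertion that on the string cone $\varepsilon_i$ is a maximum of finitely many linear forms --- so that (iii) unfolds into finitely many inequalities linear in $a$ with right-hand side linear in $\lambda$ --- is neither proved nor reduced to a precisely quoted statement; you call it ``where the real work lies'' and stop. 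That assertion is exactly the hard content of the $\mathbf{i}$-trail machinery of \cite{berenstein2001}, so as written your argument establishes the theorem only modulo the one step that actually needs proof; carrying it out from scratch would amount to reproving Berenstein--Zelevinsky.

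The paper avoids this entirely: it does not pass through Littelmann's path Littlewood--Richardson rule at all, but cites \cite[Th.\ 2.4]{berenstein2001} verbatim, which already asserts that $c^{\lambda\nu}_\beta$ equals the number of integer tuples $(t_1,\ldots,t_l) \in \mathbf{Z}^l$ satisfying four explicit families of linear conditions: homogeneous $\mathbf{i}$-trail inequalities (defining the cone, i.e.\ $r$), the equation $\sum_k t_k\alpha_{i_k} = \beta$ (the slicing $q$), $\mathbf{i}$-trail inequalities with right-hand side $-\langle\alpha_i^\vee,\lambda\rangle$, and string-type inequalities with right-hand side $\langle \alpha_{i_k}^\vee, \nu\rangle$ (together the chopping, $p$ and $s$). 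These four families are precisely your (i), (iv), (iii) and (ii) after the linearization you postpone. So the fix is simple: replace the path-model derivation by a direct appeal to that theorem, at which point your proof collapses into the paper's. One small point in your favour: you explicitly address pointedness (the string cone lies in $\mathbf{R}_{\geq 0}^l$) and boundedness of the chopped pieces, hypotheses of Theorem \ref{th:kegel-als-vpf} that the paper's own proof passes over in silence.
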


\begin{proof}
As before, (2) and (3) follow directly from (1), so we have to show (1).
This follows from \cite[Th.\ 2.4]{berenstein2001} as follows:
Fix a reduced decomposition $w = s_{i_1} \cdots s_{i_l}$ of $w$ and let $\mathbf{i} = (i_1, \ldots, i_l) \in \{ 1, \ldots, n \}^l$.
Let $^L\mathfrak{g}$ be the Langlands dual Lie algebra to $\mathfrak{g}$. For a finite-dimensional $^L\mathfrak{g}$-module $V$ and $\gamma, \delta \in P^\vee$ weights of $V$, a tuple $c = (c_1, \ldots, c_l) \in \mathbf{Z}_+^l$ is called an \emph{$\mathbf{i}$-trail} from $\gamma$ to $\delta$ in $V$ if $\sum_k c_k \alpha_{i_k} = \gamma - \delta$ and $e_{i_1}^{c_1} \cdots e_{i_l}^{c_l} : V_\delta \to V_\gamma $ is non-zero.
Then, by the above-cited theorem, $c^{\lambda\nu}_\beta$ is equal to the number of tuples $(t_1, \ldots t_l) \in \mathbf{Z}^l$ such that:

\begin{enumerate}
\item $\sum_{k=1}^l c_k t_k \geq 0$ for all $i \in \{1, \ldots, n \}$ and all $\mathbf{i}$-trails $c$ from $\omega_i^\vee$ to $w_0 s_i \omega_i^\vee$ in $V(\omega_i^\vee)$.
\item $\sum_{k=1}^l t_k \alpha_{i_k} = \beta$.
\item $\sum_{k=1}^l c_k t_k \geq - \langle \alpha_i^\vee, \lambda \rangle$ for all $i \in \{1, \ldots, n \}$ and all $\mathbf{i}$-trails $c$ from $\omega_i^\vee$ to $w_0 s_i \omega_i^\vee$ in $V(\omega_i^\vee)$.
\item $t_k + \sum_{j=k+1}^l a_{i_ki_j} t_j \leq \langle \alpha_{i_k}^\vee, \nu \rangle$ for all $k \in \{1, \ldots, l\}$.
\end{enumerate}

(1) is a system of linear inequalities, giving rise to a cone $C \subset K_\mathbf{R}$. (3) and (4) are systems of linear inequalities depending linearly on $\lambda$ respectively on $\nu$, so they define a chopping $C^{(\lambda,\nu)}$. (2) is a system of linear equations depending linearly on $\beta$, this defines the slices.
\end{proof}

\end{document}